\newtheorem{theorem}{Theorem}[section]
\newtheorem{lemma}[theorem]{Lemma}
\newtheorem{remark}[theorem]{Remark}
\newcommand{\R}{\mathbb R}
\newcommand{\Rd}{{\mathbb R}^d}
\def\Hmj#1{{{\rm H}^{-1}(#1)}}
\newcommand{\tht}{{\tilde\theta}}
\newcommand{\mtht}{{\widetilde\mtheta}}
\newcommand{\uz}{{\vu^\ast}}
\newcommand{\mA}{{\bf{A}}}
\newcommand{\mI}{{\bf{I}}}
\newcommand{\mE}{{\bf{E}}}
\newcommand{\mM}{{\bf{M}}}
\newcommand{\mR}{{\bf{R}}}
\newcommand{\mQ}{{\bf{Q}}}
\newcommand{\mx}{{\bf{x}}}
\newcommand{\me}{{\bf{e}}}
\newcommand{\msigma}{{\bm{\sigma}}}
\newcommand{\meta}{{\bm{\eta}}}
\newcommand{\mtau}{{\bm{\tau}}}
\newcommand{\mchi}{{\bm{\chi}}}
\newcommand{\mtheta}{{\bm{\theta}}}
\newcommand{\mvartheta}{{\bm{\vartheta}}}
\newcommand{\mthz}{\mtheta^\ast}
\newcommand{\sz}{{\msigma^\ast}}
\newcommand{\szi}{{\msigma_i^\ast}}
\newcommand{\st}{{\widetilde\msigma}}
\newcommand{\vu}{{\sf u}}
\newcommand{\vv}{{\sf v}}
\def\Mat#1#2{{{\rm M}_{#1}(#2)}}
\newcommand{\lA}{{\cal A}}
\newcommand{\lK}{{\cal K}}
\newcommand{\lB}{{\cal B}}
\newcommand{\lT}{{\cal T}}
\newcommand{\lTt}{\widetilde{\cal T}}
\newcommand{\lTz}{{\cal T}^\ast}
\newcommand{\lS}{{\cal S}}
\newcommand{\lSz}{{\cal S}^\ast}
\newcommand{\vf}{{\sf f}}
\newcommand{\mnul}{{\bf{0}}}
\def\oi#1#2{\langle#1,#2\rangle}        
\def\zi#1#2{[#1,#2]}               
\def\Lb#1{{{\rm L}^\infty(#1)}}
\def\pL#1#2{{{\rm L}^{#1}(#2)}}
\def\Ld#1{{{\rm L}^{2}(#1)}}
\newcommand{\dv}{{\sf div\thinspace}}
\def\Hmj#1{{{\rm H}^{-1}(#1)}}
\newcommand{\lamm}{\lambda_-(\mtheta)}
\newcommand{\lammy}{\lambda_-(\mtheta(y))}
\newcommand{\lamp}{\lambda_+(\mtheta)}
\newcommand{\lammzr}{\lambda_-(\mtheta^\ast(\rho))}
\newcommand{\lammz}{\lambda_-(\mtheta^\ast)}
\newcommand{\Kth}{\lK(\mtheta)}
\newcommand{\Kthx}{\lK(\mtheta(\mx))}
\def\thez#1{\theta^\ast_{#1}}
\def\pLd#1{{{\rm L}^{2}(#1)}}
\def\Hjnl#1{{{\rm H}^{1}_0(#1)}}
\def\MR#1{{\rm M}_#1(\R)}
\newcommand{\Sym}{{\rm Sym_d}}
\newcommand{\K}{{K}}
\renewcommand{\div}{\mathsf{div}}
\def\Svaki#1{\left(\forall\,#1\right)}
\def\dscon{\relbar\joinrel\rightharpoonup}
\def\povrhksk#1{\smash{
        \mathop{\dscon}\limits^{#1}}}
\newcommand{\supp}{\mathrm{ess \;supp\,}}
\title{Classical Optimal Designs for Stationary Diffusion with Multiple Phases}
\author{Matko Grbac\footnote{Department of Mathematics, Faculty of Science,  University of Zagreb, Croatia, 
\texttt{magrbac@math.hr}} \and 
Ivan Ivec\footnote{Faculty of Metallurgy,  University of Zagreb, Croatia, 
\texttt{iivec@simet.hr}} \and 
Marko Vrdoljak\footnote{Department of Mathematics, Faculty of Science,  University of Zagreb, Croatia, 
\texttt{marko@math.hr}}}
\begin{document}

\maketitle

\begin{abstract}
We study optimal design problems for stationary diffusion involving one or more state equations and mixtures of an arbitrary number of anisotropic materials. Since such problems typically do not admit classical solutions, we adopt a homogenization-based relaxation framework. 

The objective considered is the maximization of a weighted sum of the energies associated with each state equation, with particular emphasis on identifying cases in which the optimal design is classical, that is, of bang-bang type, composed solely of the original pure materials. 
Such cases provide valuable benchmarks for numerical methods in optimal design.

A simplified optimization problem expressed in terms of local material proportions is analyzed through a dual formulation in terms of fluxes. Using a saddle-point characterization, we establish a complete description of its optimal solutions.
The proposed approach is applied in detail to spherically symmetric problems. In the case of a ball, the method yields explicit classical solutions of the homogenization-based relaxation problem. 
\end{abstract}
\noindent
{\bf Keywords}: bang-bang solutions, optimal design, multiple anisotropic phases, homogenization

\noindent
{\bf Mathematics Subject Classification 2020}: 49J30, 49Q10, 49K30, 80M40

\section{Introduction}

Optimal design problems aim to determine the spatial arrangement of two or more given materials so that the 
resulting composite exhibits the best possible performance, usually expressed through the minimization of 
some integral functional, depending also on the solution of the corresponding partial differential equation. 
Since such problems often lack classical solutions, one is naturally 
led to consider relaxed formulations. The homogenization method provides one such relaxation: generalized 
(composite) materials are introduced as limits of fine-scale mixtures of the original phases.
This approach originates from the pioneering work of Spagnolo \cite{S68} and Murat-Tartar \cite{MT1} on 
$G$- and $H$-convergence for stationary diffusion equations.

In this paper, we consider optimal design problems for stationary diffusion involving one or more state 
equations and mixtures of an arbitrary number of anisotro\-pic materials. The homogenization approach for a single state equation and two isotropic materials was developed 
in \cite{MT}, and extended to an arbitrary number of anisotro\-pic materials in \cite{Ta1}. Problems involving multiple state equations with two isotropic phases were examined in \cite{A}. The main difficulty in the case of anisotropic materials is the lack of a complete characterization of the  G-closure, which is fully understood in the case of two isotropic materials \cite{Testfin,LC86}

For single-state problems, relaxed solutions correspond to simple laminates \cite{R78,MT}, even in the presence 
of multiple anisotropic phases \cite{Ta1}. This makes it possible to replace the full homogenization-based 
relaxation with a simpler one involving convex combinations of the admissible designs, a formulation connected 
with sliding controls in optimal control theory \cite{CM70,G62}. However, such simplifications generally fail for 
multiple-state problems: relaxed optimal designs may require higher-order sequential laminates \cite{A,AVmpe,AVfer,Vnarwa}.

The objective of the present work is 
to maximize a weighted sum of the energies associated with each state equation 
and to identify situations in which the optimal design is classical (or bang-bang solution), that is composed solely of the original 
pure materials. Such examples are 
valuable as benchmark tests for numerical algorithms in optimal design, especially shape-sensitivity methods \cite{SZ,DZ}.

Although classical solutions rarely occur, they are standard  in the case of energy maximization with two isotropic phases in spherically symmetric problems. The first such example involved a single state equation on a ball with a constant right-hand side, where the 
optimal design places the better conductor in a smaller concentric ball \cite{MT,Ta4}. This model also 
describes the maximization of the torsional rigidity of a cylindrical rod with uniform cross-sections composed of two isotropic elastic materials. By contrast, the analogous problem on 
a square admits no classical solution \cite{Glow,GKR,LCtop}. Moreover, the ball is the only simply connected C$^{1,1}$ domain
for which the classical solution appears, in case of constant right-hand side \cite{CDmax}, see also \cite{MT}. 
In the same setting, when the objective is energy minimization, 
the optimal design is not classical \cite{Ta4,CDmin,BurVrd2019}. 

Spherically symmetric problems on a ball with multiple states and two isotropic materials admit classical solutions, which can be computed explicitly in a straightforward manner \cite{Vsiam}. Additional examples of problems possessing classical solutions can be found in \cite{ABFL, CLM12}.

The remainder of the paper is organized as follows. In Section 2, we introduce the proper relaxation \eqref{odp} of the optimal design problem and consider two additional problems aimed at energy maximization: one posed on a larger admissible set \eqref{odpb}, and a much simpler problem \eqref{eq:opt_des_T}, formulated purely in terms of the local material proportions. While problems \eqref{odp} and \eqref{odpb} are equivalent for single-state problems, even for an arbitrary number of anisotropic materials \cite{Ta1,CDbook}, this equivalence generally fails in the case of multiple state equations.

The main focus of the paper is to solve  problem \eqref{eq:opt_des_T}, by exploiting its dual interpretation in terms of fluxes \cite{MT,GKR}. Section 3 describes this dual formulation and presents an application of the saddle-point theorem to characterize its solutions (Remark \ref{rem:solT}), as well as connections (Remark \ref{rem:J2}) with the solutions of problems \eqref{odpb} and \eqref{odp}. Remark \ref{rem:solT} introduces an auxiliary optimization problem, whose solution is addressed in Section 4. Although the primary interest of the paper lies in its bang–bang solutions, whose existence follows from the results of Artstein \cite{Art80}, we are able to characterize the full set of solutions.

Section 5 focuses on spherically symmetric problems, where the proposed approach can be fully applied, particularly when the domain is a ball, leading to solutions of the relaxed problem \eqref{odp}. Finally, Section 6 presents several examples illustrating the method.

\section{Formulation and Relaxation of the Problem}

Let $\Omega\subseteq\R^d$ be an open, bounded domain occupied by  $N$ distinct materials.
We assume that each material fills a measurable region of $\Omega$.
Accordingly, we introduce the measurable function $\mchi=(\chi_1,\ldots,\chi_N)$,  where $\chi_i$ denotes the characteristic function
of the  $i$-th material phase. The collection of all such admissible functions constitutes the set $\Lb{\Omega;\{\me_1\ldots,\me_N\}}$ where
$\me_i$ denotes the $i$-th canonical basis vector of $\R^N$.

The quantity of $i$-th material is prescribed exactly to $q_i$, so we have 
\begin{equation}\label{eq:kol}
\sum_{i=1}^N q_i=\mu(\Omega),
\end{equation}
where $\mu$ denotes the $d$-dimensional Lebesgue measure.
For $i=1,\ldots,N$ let $\mM_i$ be a symmetric positive definite matrix representing the conductivity tensor of material $i$. In the simplest case of isotropic materials, the conductivity reduces to a scalar multiple of the identity matrix.
In the general, anisotropic case, local rotations can be applied to construct the overall conductivity field
\begin{equation}\label{cond}
\mA(\mx)=\sum_{i=1}^N\chi_i(\mx)\mR^\tau(\mx)\mM_i\mR(\mx),\;\mx\in\Omega,
\end{equation}
where $\mR\in\Lb{\Omega; {\rm SO}(\Rd)}$ denotes a measurable rotation field.

The body is subjected to distinct heat source terms
$f_1,\ldots,f_m\in\Hmj\Omega$. By the  Lax-Milgram's lemma,  boundary value problems
\begin{equation}\label{sem}
\left\{
\begin{array}{l}
-\dv(\mA\nabla u_i)=f_i\\
u_i\in {\rm H}^1_0(\Omega)\,\\
\end{array}\qquad\qquad i=1,\ldots, m
\right.
\end{equation}
have unique solutions (temperatures)
$u_1,\ldots,u_m\in\Hjnl\Omega$.
We shall use notation $\vf=(f_1,\ldots, f_m)$ and $\vu=(u_1,\ldots, u_m)$. The objective is to determine an arrangement of given materials that maximizes the weighted sum of the corresponding energy functionals 
\begin{equation}\label{cf}
\displaystyle\sum_{i=1}^m\mu_i\int_\Omega \mA\nabla u_i\cdot\nabla u_i\,d\mx=\sum_{i=1}^m\mu_i\int_\Omega f_iu_i\,d\mx,
\end{equation}
where the latter expression is interpreted as the dual pairing between  $\Hmj\Omega$
and $\Hjnl\Omega$ whenever the integral is not classically defined.
The weights $\mu_i>0$ are prescribed.

Even  in the simplest isotropic setting with $N=2$ materials  and a single load case ($m=1$), it is well known that the problem may fail to admit a solution \cite{MT,GKR}. 
Consequently, an appropriate relaxation must be introduced, typically formulated within the framework  of homogenization theory \cite{MT,Ta2,A,CDbook}. 

Within this framework, we introduce composite  materials. For a local volume fraction $\mvartheta\in\K$, with
\[
\K:=\left\{\mtheta\in\zi01^N:\sum_{i=1}^N\theta_i=1\right\},
\]
 and $\mA\in\Mat n\R$ we say that $\mA$ belongs 
to the set $\lK(\mvartheta)$ if and only if  there exist a sequence $(\mchi^n)\subseteq\Lb{\Omega;\{\me_1\ldots,\me_N\}}$ and a sequence of rotation fields
$(\mR_n)\subseteq\Lb{\Omega;{\rm SO}(\Rd)}$ such that sequence 
\begin{equation}\label{condn}
\mA^n=\sum_{i=1}^N\chi^n_i\mR_n^\tau\mM_i\mR_n 
\end{equation}
$H$-converges 
to a limit matrix field $\mA^\infty$, with $\mchi^n\povrhksk\ast\,\mtheta$ and such that $\mA=\mA^\infty(\widetilde\mx)$,  
$\mvartheta=\mtheta(\widetilde\mx)$, for some Lebesgue point $\widetilde\mx$ of both $\mA^\infty$ and $\mtheta$. Since each $\mA^n$ is a symmetric matrix almost everywhere on $\Omega$, the same hold for its $H$-limit $\mA^\infty$.

As a consequence (see \cite[Lemma 39--40]{Ta2}), the limit matrix field satisfies $\mA^\infty(\mx)\in\Kthx$ for almost every $\mx\in\Omega$, and, conversely, any measurable pair $(\mtheta,\mA^\infty)$ with this property can be approximated by a sequence of such mixtures, in the sense that  $\mA^n\povrhksk H\,\mA^\infty$ and $\mchi^n\povrhksk\ast\,\mtheta$.  

The principal difficulty arises from the fact that the set  $\lK(\mvartheta)$ is not known in explicit form, except in the special case of mixtures of two isotropic materials. However, in the single-state setting this difficulty can be effectively circumvented by employing classical Voigt-Reuss bounds on the effective conductivities of mixtures in  $\lK(\mvartheta)$, which we recall below.

If $\lambda_1(\mM)$ and $\lambda_d(\mM)$  denote, respectively, the smallest and the largest eigenvalue of a symmetric matrix  $\mM$,
we introduce the quantities
\begin{align*}
&\frac1{\lambda_{-}(\mtheta)} = \sum_{i = 1}^{N} \frac{\theta_{i}}{\lambda_{1}(\mM_{i})}\\
&\lambda_{+}(\mtheta) = \sum_{i = 1}^{N} \theta_{i}\lambda_{d}(\mM_{i}),
\end{align*}
so that $0<\lamm\leq\lamp$ for every $\mtheta\in\K$.
For any  $\mtheta\in\Lb{\Omega;\K}$ and any $\mA\in\Kth$ almost everywhere on $\Omega$,
we obtain the uniform (ellipticity) bounds
\begin{equation}\label{ocj}
\lamm\mI\leq\mA\leq\lamp\mI\,\;\hbox{ almost everywhere on }\Omega.
\end{equation}

We are now in a position to formulate the  relaxed problem. Let
\begin{align*}
\lA=\Bigl\{(\mtheta,\mA)\in\Lb{\Omega;\K\times\Sym}:&\int_\Omega\theta_i\,d\mx = q_i, i=1,\ldots,N, \\
&\mA(\mx)\in\lK(\mtheta(\mx))\,,\,\hbox{ a.e. }{\mx\in\Omega}\Bigr\}\,,
\end{align*}
and consider the following  relaxed optimal design problem
\begin{equation}\label{odp}\tag{A}
\begin{array}{ll}
J(\mtheta,\mA)&=\displaystyle\sum_{i=1}^m\mu_i\int_\Omega f_iu_i\,d\mx\to \max\,\\
&\vu\hbox{ solves }(\ref{sem}) \hbox{ with }(\mtheta,\mA)\in\lA.
\end{array}
\end{equation}
 
From the bounds \eqref{ocj} it follows that $\lA\subseteq\lB$, where
\begin{align*}
\lB=\Bigl\{(\mtheta,\mA)\in\Lb{\Omega;\K\times\Sym}:&\int_\Omega\theta_i\,d\mx = q_i, i=1,\ldots,N, \\
&\lamm\mI\leq\mA\leq\lamp\mI\,,\,\hbox{ a.e. on }{\Omega}\Bigr\},
\end{align*}
and we introduce the same optimization problem, but on this larger domain:
\begin{equation}\label{odpb}\tag{B}
\begin{array}{ll}
J(\mtheta,\mA)&=\displaystyle\sum_{i=1}^m\mu_i\int_\Omega f_iu_i\,d\mx\to \max\,\\
&\vu\hbox{ solves }(\ref{sem}) \hbox{ with }(\mtheta,\mA)\in\lB.
\end{array}
\end{equation}

Owing to the property
\begin{equation}\label{kebe}
\Svaki{\mtheta\in\K}\Svaki{\mE\in\Rd}\quad\lK(\mtheta)\mE=\lB(\mtheta)\mE,
\end{equation}
it can be shown \cite{Ta1,CDbook} that, for a single state equation ($m=1$), the  maximal values of $J$ over $\lA$ and its superset $\lB$ coincide, even for general cost functionals $J$. More precisely, the equality in \eqref{kebe}, and consequently the maximal value of $J$ on $\lA$, is attained by a simple laminate, whose effective conductivity has  eigenvalue  $\lamm$ in direction $E$, and $\lamp$ in at least one orthogonal direction. In the special case of the maximizing the energy functional this result yields the conclusion that the same maximal value is achieved by designs $(\mtheta,\mA)$ with $\mA=\lamm\mI$ almost everywhere in $\Omega$.
In other words, for given $\mtheta$ instead of state equation \eqref{sem} we study 
\begin{equation}\label{semt}
\left\{
\begin{array}{l}
-\dv(\lamm\nabla u_i)=f_i\\
u_i\in {\rm H}^1_0(\Omega)\,\\
\end{array}\qquad\qquad i=1,\ldots, m.
\right.
\end{equation}

In analogy with our setting, if we define $I(\mtheta):=J(\mtheta,\lamm\mI)$, the third natural problem is the maximization of $I$ over the set
\[
    \lT = \left\{ \mtheta \in \Lb{\Omega; \K)} :\int_\Omega\theta_i\,d\mx = q_i, i=1,\ldots,N \right\}.
\]
The third problem is therefore stated as
\begin{equation}\label{eq:opt_des_T}\tag{T}
    \begin{split}
        I(\mtheta) &: = \sum_{i=1}^m \mu_i \int_\Omega f_i u_i dx \to\max\\
        &\vu\hbox{ solves }(\ref{semt}) \hbox{ with }\mtheta\in\lT.
    \end{split}
\end{equation}
\section{Interpretation through a minimax problem}

In \cite{Vsiam} the same optimization problems were analyzed in the simpler setting of mixtures of two isotropic materials. In that case, 
a scalar function $\theta$ represents the volume fraction of one material, whereas in the present work the local mixture is encoded by a vector-valued function $\mtheta$, whose components sum to 1. 
Aside from this increased dimensionality, the structure of the relaxed problems considered here is formally analogous to the two-phase case once the expressions for the effective lower and upper bounds $\lamm$ and $\lamp$ have been appropriately generalized. Under this correspondence, the maximization problems over the admissible sets $\lA$, $\lB$ and $\lT$ take the same abstract form as those studied in \cite{Vsiam}. Consequently, our strategy mirrors the approach developed there, while omitting technical details that overlap entirely with the known two-phase analysis.

We employ the dual of variational formulation, expressed in terms of the heat fluxes $\msigma_i$, in order to rewrite the maximization of $I$ over $\lT$ as a minimax problem. For each admissible  $\mtheta$, we have
\begin{align}
I(\mtheta)&=-\left(-\sum_{i=1}^m\mu_i\int_\Omega f_iu_i\,d\mx\right)\nonumber\\ 
&=-\sum_{i=1}^m\mu_i\int_\Omega \lamm|\nabla u_i|^2 - 2f_iu_i\,d\mx\nonumber\\
&=-\min_{\vv\in\Hjnl{\Omega;\R^m}}\sum_{i=1}^m\mu_i\int_\Omega \lamm|\nabla v_i|^2 - 2f_iv_i\,d\mx\label{varfor}\\
&=-\max_{\msigma\in\lS}\left(-\sum_{i=1}^m\mu_i\int_\Omega \frac{|\msigma_i|^2}{\lamm}\,d\mx\right)\label{dual1}\\
&=\min_{\msigma\in\lS}\sum_{i=1}^m\mu_i\int_\Omega \frac{|\msigma_i|^2}{\lamm}\,d\mx.\nonumber
\end{align}
Equality \eqref{varfor} follows from the  standard variational formulation of the boundary value problem \eqref{semt}, and its solution $\vu$ is the unique minimizer in \eqref{varfor}. Equality \eqref{dual1}
is a consequence of the inequality (for any uniformly positive and bounded measurable matrix field $\mA$)
\begin{equation}\label{fenchel}
-\int_\Omega{\mA}^{-1}\sigma_i\cdot\sigma_i\,d\mx\leq\int_\Omega\mA\nabla v_i\cdot\nabla v_i-2f_iv_i\,d\mx,
\end{equation}
valid for any $v_i\in\Hjnl\Omega$ and $\sigma_i\in\Ld{\Omega;\Rd}$ satisfying $-\div\sigma_i=f_i$, with equality if and only if $\sigma_i=\mA\nabla v_i$. 
Therefore, the set $\lS$ appearing in equality \eqref{dual1} is defined  by
\[
\lS=\{\msigma\in\pLd{\Omega;\Rd}^m:-\dv\msigma_i=f_i, i=1,\ldots, m\}.
\]
Although inequality \eqref{fenchel} can be proved directly, it can also be interpreted as an instance of the Fenchel–Young inequality associated with the quadratic functional $\int_\Omega\mA\nabla v_i\cdot\nabla v_i\,d\mx$ (see \cite[Example IV.2.1]{ET}).

To summarize these conclusions, we obtain the following lemma.

\begin{lemma}\label{lem:dual}
For any $\mtheta\in\lT$ we have
\[
I(\mtheta)=\min_{\msigma\in\lS}\sum_{i=1}^m\mu_i\int_\Omega \frac{|\msigma_i|^2}{\lamm}\,d\mx
\]
and the minimum is attained at unique $\msigma\in \lS$ given by $\msigma_i=\lamm\nabla u_i$, where $u_i$ solves \eqref{semt}, for any $i=1,\ldots, m$.
\end{lemma}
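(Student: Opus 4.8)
The plan is to justify, link by link, the chain of identities \eqref{varfor}--\eqref{dual1} already displayed above; once each equality is secured the lemma follows at once. First I would record that, by the definition of $\lamm$, one has $\min_i\lambda_1(\mM_i)\le\lamm\le\max_i\lambda_1(\mM_i)$ almost everywhere, so the bilinear form $(v,w)\mapsto\int_\Omega\lamm\nabla v\cdot\nabla w\,d\mx$ is bounded and coercive on $\Hjnl\Omega$; hence for each $i$ the solution $u_i$ of \eqref{semt} is the unique minimizer over $\Hjnl\Omega$ of the strictly convex functional $G_i(v):=\int_\Omega\lamm|\nabla v|^2-2f_iv\,d\mx$, the terms $\int_\Omega f_iv$ being read as the $\Hmj\Omega$--$\Hjnl\Omega$ pairing when needed. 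Testing the $i$-th equation with $u_i$ gives the energy identity $\int_\Omega f_iu_i=\int_\Omega\lamm|\nabla u_i|^2$, so $\mu_i\int_\Omega f_iu_i=-\mu_iG_i(u_i)=-\mu_i\min_{v\in\Hjnl\Omega}G_i(v)$. Summing over $i$ and using $\mu_i>0$ together with the fact that $\vv=(v_1,\dots,v_m)\mapsto\sum_i\mu_iG_i(v_i)$ decouples across components yields \eqref{varfor}.

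Next I would pass to the flux side. Writing $\lS_i:=\{\sigma\in\Ld{\Omega;\Rd}:-\dv\sigma=f_i\}$, we have $\lS=\prod_{i=1}^m\lS_i$, a nonempty affine subset of $\pLd{\Omega;\Rd}^m$. Specializing \eqref{fenchel} to $\mA=\lamm\mI$ gives $-\int_\Omega\frac{|\sigma|^2}{\lamm}\,d\mx\le G_i(v)$ for all $v\in\Hjnl\Omega$ and $\sigma\in\lS_i$, with equality exactly when $\sigma=\lamm\nabla v$. Since $\lamm\nabla u_i\in\lS_i$ (because $-\dv(\lamm\nabla u_i)=f_i$) and $-\int_\Omega\frac{|\lamm\nabla u_i|^2}{\lamm}\,d\mx=G_i(u_i)=\min_v G_i(v)$, there is no duality gap: $\min_{v\in\Hjnl\Omega}G_i(v)=\max_{\sigma\in\lS_i}\bigl(-\int_\Omega\frac{|\sigma|^2}{\lamm}\,d\mx\bigr)$, the maximum being attained at $\sigma=\lamm\nabla u_i$. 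Multiplying by $\mu_i$, summing, and invoking the product structure of $\lS$ turns \eqref{varfor} into \eqref{dual1}, which completes the evaluation $I(\mtheta)=\min_{\msigma\in\lS}\sum_i\mu_i\int_\Omega\frac{|\sigma_i|^2}{\lamm}\,d\mx$. (Equivalently, one may cite the Fenchel--Rockafellar duality for the quadratic functional, \cite[Example IV.2.1]{ET}.)

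Finally I would address uniqueness of the minimizing flux. The functional $\msigma\mapsto\sum_i\mu_i\int_\Omega\frac{|\sigma_i|^2}{\lamm}\,d\mx$ is strictly convex on $\pLd{\Omega;\Rd}^m$, since $\mu_i>0$ and $\frac1{\lamm}\ge\frac1{\max_i\lambda_1(\mM_i)}>0$ almost everywhere, and $\lS$ is convex; hence it admits at most one minimizer on $\lS$. As $\msigma^\star=(\lamm\nabla u_1,\dots,\lamm\nabla u_m)$ realizes the minimum by the previous paragraph, it is the unique minimizer, which is exactly the assertion of the lemma.

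The only genuine subtlety lies in the dual identity \eqref{dual1}: one must ensure simultaneously that the concave dual problem is attained and that strong duality holds with no gap. The hard part is thus not a compactness or closedness argument — both facts come out at once from inequality \eqref{fenchel} and its equality case, once we observe that $\lamm\nabla u_i$ is an admissible flux — so beyond Lax--Milgram the remaining work is careful bookkeeping with the decoupled structure of the sum over $i$ and the product structure of $\lS$.
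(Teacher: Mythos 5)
Your proof is correct and takes essentially the same route as the paper, whose argument is precisely the displayed chain \eqref{varfor}--\eqref{dual1} combined with the Fenchel--Young inequality \eqref{fenchel} specialized to $\mA=\lamm\mI$ and its equality case $\msigma_i=\lamm\nabla u_i$. You have merely made explicit the coercivity bounds on $\lamm$, the absence of a duality gap, and the strict-convexity argument for uniqueness of the minimizing flux, all of which the paper leaves implicit.
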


\begin{remark}\label{rem:J1}
The same reasoning yields an analogous result for the energy functional $J$ on $\lB$:
for any $(\mtheta,\mA)\in\lB$,
\[
J(\mtheta,\mA)=\min_{\msigma\in\lS}\sum_{i=1}^m\mu_i\int_\Omega \mA^{-1}\msigma_i\cdot\msigma_i\,d\mx
\]
and the minimum is attained at a unique $\msigma\in \lS$ given by $\msigma_i=\mA\nabla u_i$, where $u_i$ denotes the state function associated with $\mA$ through  \eqref{sem}.
\end{remark}
Finally, we express the maximization of $I$ over $\lT$, as the following max-min problem:
\begin{align}\label{minimax}
\max_{\mtheta\in\lT} I(\mtheta)&=\max_{\mtheta\in\lT} \min_{\msigma\in\lS}\sum_{i=1}^m\mu_i\int_\Omega \frac{|\msigma_i|^2}{\lamm}\,d\mx\nonumber \\
&=\max_{\mtheta\in\lT} \min_{\msigma\in\lS}\sum_{i=1}^m\mu_i\int_\Omega \sum_{j=1}^N\frac{\theta_j}{\lambda_1(\mM_j)}|\msigma_i|^2\,d\mx.
\end{align}

For this max-min problem, one can notice that the functional
\[
H(\mtheta,\msigma):=\sum_{i=1}^m\mu_i\int_\Omega  \frac{|\msigma_i|^2}{\lamm} \,d\mx
\]
is concave (in fact, linear) with respect to $\mtheta$ and uniformly convex and quadratic in $\msigma$. With respect to L$^\infty$ weak-* topology, the set $\lT$ is compact, and $H$ is continuous.
Consequently,  $H$ admits a saddle point \cite[Chapter 6]{ET} (see also \cite{F53})
and 
the set of all its saddle points is of the form $\lTz\times\lSz$, with convex sets $\lTz\subseteq\lT$ and 
$\lSz\subseteq\lS$. 
Moreover, the strict convexity of $H$ in $\msigma$ implies that $\lSz$ is a singleton: $\lSz=\{\sz\}$, so  saddle-points $(\mthz,\sz)$ are characterized by: 
\begin{equation}\label{saddle}
H(\mtheta,\sz)\leq H(\mthz,\sz)\leq H(\mthz,\msigma), \quad\mtheta\in\lT, \msigma\in\lS.
\end{equation}
We denote the (common) value $H(\mthz,\sz)$  by $H^*$.

\begin{lemma}\label{lem:maxB}
The set of optimal solutions of problem \eqref{eq:opt_des_T} coincides with $\lTz$. 

\end{lemma}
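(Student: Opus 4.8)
The plan is to read the statement off directly from the saddle‑point characterization \eqref{saddle}, combined with Lemma \ref{lem:dual}, which identifies $I(\mtheta)$ with $\min_{\msigma\in\lS}H(\mtheta,\msigma)$ (an attained minimum, not merely an infimum). First I would record two elementary consequences of \eqref{saddle} to be used throughout. From the left inequality in \eqref{saddle}, applied with the fixed saddle flux $\sz$, one has $H(\mtheta,\sz)\le H^*$ for every $\mtheta\in\lT$, whence $I(\mtheta)=\min_{\msigma\in\lS}H(\mtheta,\msigma)\le H(\mtheta,\sz)\le H^*$. From the right inequality in \eqref{saddle}, $H(\mthz,\msigma)\ge H^*$ for every $\msigma\in\lS$ and every $\mthz\in\lTz$, so the minimum defining $I(\mthz)$ is attained at $\sz$ and equals $H^*$; that is, $I(\mthz)=H^*$. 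Together these show $\max_{\mtheta\in\lT}I(\mtheta)=H^*$ and that every $\mthz\in\lTz$ attains it, giving the inclusion $\lTz\subseteq\{\text{optimal solutions of }\eqref{eq:opt_des_T}\}$.

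For the reverse inclusion I would take an arbitrary optimal $\mtheta_0\in\lT$, so $I(\mtheta_0)=H^*$, and verify that $(\mtheta_0,\sz)$ satisfies the saddle inequalities \eqref{saddle}, so that $\mtheta_0$ lies in the first factor $\lTz$ of the saddle set $\lTz\times\lSz$. One first pins down $H(\mtheta_0,\sz)$: since $\sz\in\lS$, we have $H(\mtheta_0,\sz)\ge\min_{\msigma\in\lS}H(\mtheta_0,\msigma)=I(\mtheta_0)=H^*$, while the left inequality in \eqref{saddle} gives $H(\mtheta_0,\sz)\le H^*$; hence $H(\mtheta_0,\sz)=H^*$. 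The left saddle inequality $H(\mtheta,\sz)\le H(\mtheta_0,\sz)$ for all $\mtheta\in\lT$ is then exactly $H(\mtheta,\sz)\le H^*$, which is again the left inequality of \eqref{saddle}. The right saddle inequality $H(\mtheta_0,\sz)\le H(\mtheta_0,\msigma)$ for all $\msigma\in\lS$ is precisely $H^*=I(\mtheta_0)=\min_{\msigma\in\lS}H(\mtheta_0,\msigma)\le H(\mtheta_0,\msigma)$. Thus $(\mtheta_0,\sz)$ is a saddle point of $H$, so $\mtheta_0\in\lTz$, and the two inclusions yield the claimed equality.

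I do not expect a genuine obstacle here: this is the standard equivalence between maximizers of the marginal function $\mtheta\mapsto\inf_{\msigma}H(\mtheta,\msigma)$ and the $\mtheta$‑components of saddle points, once the existence of a saddle point is granted — which was established in the discussion preceding \eqref{saddle}. The only points that require a little attention are that the inner minimum in Lemma \ref{lem:dual} is attained, so that $I(\mtheta)$ really is a minimum over $\lS$ and can be compared to $H^*$ through the single test flux $\sz$, and that the left inequality of \eqref{saddle} is available for every competitor $\mtheta\in\lT$ with the same fixed $\sz$; both are already part of, respectively, Lemma \ref{lem:dual} and the saddle‑point characterization, so no further work is needed.
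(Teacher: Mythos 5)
Your proof is correct and follows essentially the same route as the paper: both directions are the standard equivalence between maximizers of the marginal function $\mtheta\mapsto\min_{\msigma\in\lS}H(\mtheta,\msigma)$ and the $\mtheta$-components of the saddle set, using Lemma \ref{lem:dual} to identify $I$ with that marginal function. The only difference is cosmetic: where the paper cites \cite[Proposition 1.2]{ET} for the inclusion of maximizers into $\lTz$, you verify the two saddle inequalities for $(\mtheta_0,\sz)$ by hand, which is a valid unpacking of that citation.
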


\begin{proof}
Let us denote by $\lTt$ the set of all maximizers of $I$ over $\lT$. The inclusion  $\lTt\subseteq\lTz$ follows from  
\eqref{minimax}: if  $I(\mtht)=\max_\lT I$, then $(\mtht,\sz)$ is a saddle point of $H$ by \cite[Proposition 1.2]{ET}. 

For the proof of converse inclusion $\lTz\subseteq\lTt$, let $\mthz\in\lTz$ and let $\mtheta\in\lT$ be arbitrary. 
Denote by $\vu$ the solution of \eqref{semt},
and define $\msigma\in\lS$  by $\msigma_i=\lamm\nabla u_i$, for $i=1,\ldots,m$.
Then we obtain
\[
I(\mtheta)=H(\mtheta,\msigma)\leq H(\mtheta,\sz)\leq H(\mthz,\sz) = \min_{\mtau\in\lS}H(\mthz,\mtau)=I(\mthz),
\]
which proves the claim. Here, the first and last equalities, as well as the first inequality, follow
from Lemma \ref{lem:dual}. The second inequality and the second equality 
are consequences of the saddle-point property \eqref{saddle} of $H$.

\end{proof}

\begin{remark}\label{rem:solT}
By the previous lemma, a function $\mthz$ solves 
\eqref{eq:opt_des_T} if and only if it satisfies both inequalities in \eqref{saddle}. The first inequality
is equivalent to the statement that $\mthz$ is a solution of the optimization problem
\begin{equation}\label{max_1}
\begin{aligned}
  H(\mtheta,\sz)&=\sum_{i=1}^m\mu_i\int_\Omega \frac{|\szi|^2}{\lamm}\,d\mx\to\max\\
  \mtheta&\in\lT,
\end{aligned}
\end{equation}
where the flux field $\sz$ is the unique minimizer provided by the  saddle point theorem.
The next Section is therefore devoted to the solution of problem \eqref{max_1} under the assumption that $|\sz|$ is known. 

The second inequality in \eqref{saddle} is, by Lemma \ref{lem:dual}, equivalent to 
\begin{equation}\label{eq:optT}
\szi=\lammz\nabla u^*_i,\;i=1,\ldots,m,
\end{equation}
where $\uz$ denotes the solution of \eqref{semt} corresponding to $\mthz$.
\end{remark}

We note that the  problem of maximizing $J$ over $\lA$ cannot be treated by the same approach, since the admissible set $\lA$ is not convex \cite[Example 2.1]{Vsiam}. However, the same problem on a bigger admissible set $\lB$ admits similar treatment via saddle-point theorem \cite{Vsiam}. The following simple argument will have important consequences later.

\begin{remark}\label{rem:J2}
Let $(\mtheta,\mA)\in\lB$, and let $u_i$ be the solution of \eqref{sem}, with associated fluxes $\sigma_i=\mA\nabla u_i$,  $i=1,\ldots m$. By Remark \ref{rem:J1}  we have
\[
J(\mtheta,\mA)=\sum_{i=1}^m\mu_i\int_\Omega \mA^{-1}\msigma_i\cdot\msigma_i\,d\mx\leq \sum_{i=1}^m\mu_i\int_\Omega \mA^{-1}\szi\cdot\szi\,d\mx\leq
H(\mtheta,\sz)\leq H^*.
\]
That upper bound is sharp. Indeed, if $\mthz\in\lTz$ then $(\mthz,\lammz\mI)$ is a maxmizer of $J$ on $\lB$:  by Remark \ref{rem:solT} we have $\szi=\lammz\nabla u^*_i$, where $\uz$ is the solution of \eqref{semt}, so  $J(\mthz,\lammz\mI)=H(\mthz,\sz)$ holds. 

Moreover, for such $\mthz$, a pair $(\mthz,\mA)\in\lB$ is a maximizer of $J$ over $\lB$ if and only if
\begin{equation}\label{eq:optB}
\mA^{-1}\szi=\frac1{\lammz}\szi\,,\;i=1,\ldots,m.
\end{equation}
Indeed, by Remark \ref{rem:solT}, this condition is equivalent to $\mA\nabla u^*_i=\lammz\nabla u^*_i=\szi$, where  $\uz$  is the solution of \eqref{semt} 
corresponding to $\mthz$, but also, by the last equalities, the solution of \eqref{sem}.

Conversely, let $(\mtheta,\mA)\in\lB$ be a maximizer of $J$ over $\lB$. By the same reasoning as above, we obtain
\[
H^*=J(\mtheta,\mA)\leq  \sum_{i=1}^m\mu_i\int_\Omega \mA^{-1}\szi\cdot\szi\,d\mx\leq
H(\mtheta,\sz)\leq H^*.
\]
Hence, all inequalities must be equalities. In particular, $\mtheta$ solves \eqref{eq:opt_des_T}. Consequently, the optimality condition  \eqref{eq:optB} holds  with $\mtheta$ in place of $\mthz$.

\end{remark}

\section{Maximum points of $I$ on $\lT$}

Our goal is now to characterize the set of solutions of problem \eqref{max_1}, under the assumption that the magnitude of the optimal flux field $\sigma^\ast\in\lS$ is known.
In view of  Remark \ref{rem:solT}, this constitutes the first step toward determining solutions of  problem \eqref{eq:opt_des_T}.
To simplify the notation, let us introduce
\[
    \psi:=\sum_{i=1}^m \mu_i |\bm{\sigma}^\ast_i|^2 \in \textup{L}^1(\Omega).
\]
Clearly,  $\psi \geq 0$ almost everywhere on $\Omega$. As mentioned previously, problem  \eqref{max_1} can  be 
equivalently rewritten as
\begin{equation}\label{max_2}
\begin{aligned}
  \int_\Omega \sum_{j=1}^N\frac{\theta_j}{\lambda_1(\mM_j)} \psi\,d\mx&\to\max\\
  \mtheta&\in\lT.
\end{aligned}
\end{equation}
Let us also for a given $f : \Omega \to [0,\infty)$ denote by $\lambda_f : [0,\infty) \to [0,\infty]$ its distribution function, defined as
\[
    \lambda_{f}(\alpha)=\mu(\{ \mx \in \Omega : f(\mx) > \alpha \}).
\]
We emphasize that the distribution function $\lambda_f$ is non-increasing and right-continuous. As such, it is continuous up to countably many points. Moreover, for each $\alpha > 0$ the left-sided limits equate to
\begin{equation}\label{eq:ls_limit}
    \lambda_f(\alpha-) = \mu(\{ \mx \in \Omega : f(\mx) \geq \alpha \}).
\end{equation}
Lastly, the points of discontinuity are precisely those $\alpha>0$ for which the level set $\{ \mx \in \Omega : \psi(\mx)=\alpha\}$ has positive measure.

To this end, under the assumption
\begin{equation}\label{eq:pad}
\lambda_1(\mM_1)<\lambda_1(\mM_2)<\cdots<\lambda_1(\mM_N),
\end{equation}
we introduce a  sequence of nonnegative real numbers $\alpha_1,\ldots,\alpha_N$:
\[
    \alpha_k : = \min \left\{ \alpha \geq 0 : \lambda_{\psi}(\alpha) \leq \sum_{i=1}^k q_i\right\}.
\]
Since $\lambda_\psi$ is right-continuous, these numbers are well-defined (in the sense that the minima above take place) and for each $k=1,\ldots,N$ it holds 
\[
\lambda_\psi(\alpha_k) \leq \sum_{i=1}^k q_i \leq \lambda_\psi(\alpha_k-).
\]
Moreover, since it is non-increasing, we also have $\alpha_{k+1} \leq \alpha_k$ for each $k=1,\ldots,N-1$ with $\alpha_N=0$. 

Necessary conditions for optimality of the solution of optimal design problem \eqref{max_2} are now given in the next result.

\begin{theorem}\label{tm:solT}
    Suppose that \eqref{eq:kol} and \eqref{eq:pad} hold. If $\mthz \in \lT$ is an optimal solution of problem \eqref{max_2}, then, for each $k=1,\ldots,N$ (with the convention $\alpha_0 = +\infty$) the following holds
    \begin{equation}\label{eq:nec_cond}
        \begin{split}
            \supp \thez{k} \subseteq \{ \mx \in \Omega : \alpha_{k} \leq \psi \leq \alpha_{k-1}\}&\\
            \thez{k} = 1 \text{ a.e. on } \{ \mx \in \Omega : \alpha_{k} < \psi < \alpha_{k-1}\}&
        \end{split}
    \end{equation}
    Conversely, any  $\mthz \in \lT$ satisfying conditions \eqref{eq:nec_cond} is an optimal solution of problem \eqref{max_2}.
\end{theorem}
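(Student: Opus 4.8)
The plan is to recast \eqref{max_2} as a family of decoupled one‑dimensional ``bathtub'' problems through a summation‑by‑parts trick. Write $c_j:=1/\lambda_1(\mM_j)$, so that assumption \eqref{eq:pad} gives $c_1>c_2>\cdots>c_N>0$, and denote the objective of \eqref{max_2} by $F(\mtheta):=\int_\Omega\sum_{j=1}^N c_j\theta_j\,\psi\,d\mx$. For $\mtheta\in\lT$ introduce the partial sums $\Theta_k:=\sum_{j=1}^k\theta_j$; then $0\equiv\Theta_0\le\Theta_1\le\cdots\le\Theta_N\equiv1$, each $\Theta_k\in\Lb{\Omega;[0,1]}$ satisfies $\int_\Omega\Theta_k\,d\mx=Q_k:=\sum_{j=1}^k q_j$, and conversely every such chain gives an admissible $\mtheta$ via $\theta_k=\Theta_k-\Theta_{k-1}$. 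Abel summation yields $\sum_{j=1}^N c_j\theta_j=c_N+\sum_{k=1}^{N-1}(c_k-c_{k+1})\Theta_k$, hence
\[
F(\mtheta)=c_N\!\int_\Omega\psi\,d\mx+\sum_{k=1}^{N-1}(c_k-c_{k+1})\int_\Omega\Theta_k\,\psi\,d\mx .
\]
Since $c_k-c_{k+1}>0$, maximizing $F$ over $\lT$ reduces to maximizing each $G_k(w):=\int_\Omega w\,\psi\,d\mx$ over $\{w\in\Lb{\Omega;[0,1]}:\int_\Omega w\,d\mx=Q_k\}$, provided the individual maximizers can be realized simultaneously by one admissible chain.

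Next I would invoke the bathtub principle: for each $k$, $G_k$ attains its maximum over the above set exactly at those $w$ for which $w=1$ a.e.\ on $\{\psi>\alpha_k\}$, $w=0$ a.e.\ on $\{\psi<\alpha_k\}$, and $\int_{\{\psi=\alpha_k\}}w\,d\mx=Q_k-\mu(\{\psi>\alpha_k\})$; the defining inequalities $\lambda_\psi(\alpha_k)\le Q_k\le\lambda_\psi(\alpha_k-)$ ensure such $w$ exist and that the last constraint is feasible. The ``only'' part is the standard mass‑transfer argument: if $w<1$ on a positive‑measure subset of $\{\psi>\alpha_k\}$ then $\int w=Q_k\ge\mu(\{\psi>\alpha_k\})$ forces $w>0$ somewhere on $\{\psi\le\alpha_k\}$, and shifting this mass upward strictly increases $G_k$; symmetrically for $w>0$ inside $\{\psi<\alpha_k\}$.

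For the sufficiency implication I would take $\mthz\in\lT$ satisfying \eqref{eq:nec_cond} and check that $\Theta_k^\ast:=\sum_{j\le k}\thez j$ is a bathtub maximizer of $G_k$ for every $k=1,\ldots,N-1$: for $j>k$ the support condition and $\alpha_{j-1}\le\alpha_k$ give $\thez j=0$ a.e.\ on $\{\psi>\alpha_k\}$, so $\Theta_k^\ast=1$ there; for $j\le k$ the support condition and $\alpha_j\ge\alpha_k$ give $\thez j=0$ a.e.\ on $\{\psi<\alpha_k\}$, so $\Theta_k^\ast=0$ there; and $\int_\Omega\Theta_k^\ast\,d\mx=Q_k$. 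Hence $F(\mthz)=c_N\int_\Omega\psi\,d\mx+\sum_{k=1}^{N-1}(c_k-c_{k+1})\max G_k$, which by the displayed identity is an upper bound for $F$ on all of $\lT$; so $\mthz$ is optimal. Conversely, if $\mthz$ is optimal the identity forces $G_k(\Theta_k^\ast)=\max G_k$ for each $k$ — here the strict inequalities $c_k>c_{k+1}$ are essential — so each $\Theta_k^\ast$ is a bathtub maximizer, i.e.\ $\Theta_k^\ast=1$ a.e.\ on $\{\psi>\alpha_k\}$ and $\Theta_k^\ast=0$ a.e.\ on $\{\psi<\alpha_k\}$. Writing $\thez k=\Theta_k^\ast-\Theta_{k-1}^\ast$ and using $\alpha_k\le\alpha_{k-1}$: on $\{\psi>\alpha_{k-1}\}$ both partial sums equal $1$ and on $\{\psi<\alpha_k\}$ both equal $0$, giving $\thez k=0$ a.e.\ on $\Omega\setminus\{\alpha_k\le\psi\le\alpha_{k-1}\}$, i.e.\ the first line of \eqref{eq:nec_cond}; on $\{\alpha_k<\psi<\alpha_{k-1}\}$ we get $\Theta_k^\ast=1$, $\Theta_{k-1}^\ast=0$, hence $\thez k=1$, the second line. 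The boundary indices $k=1$ (with $\alpha_0=+\infty$, $\Theta_0^\ast\equiv0$) and $k=N$ (with $\alpha_N=0$ and $Q_N=\mu(\Omega)$ by \eqref{eq:kol}, which forces $\Theta_N^\ast\equiv1$) are handled directly from these formulas.

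The only genuinely delicate point is the bookkeeping on the level sets $\{\psi=\alpha_k\}$ of positive measure, where the bathtub maximizer is not unique: the ``if and only if'' must retain the exact integral constraint on those sets, and one must verify that the resulting freedom in the chain $(\Theta_k^\ast)$ matches precisely the freedom left in \eqref{eq:nec_cond} (whose second line pins down $\thez k$ only on the open band $\{\alpha_k<\psi<\alpha_{k-1}\}$, and which is silent about how the phases with coinciding thresholds split the mass on a common level set). Everything else is routine: weak‑$*$ compactness of $\lT$ together with $\psi\in\textup{L}^1(\Omega)$ already guarantees a maximizer exists (and the sufficiency construction exhibits one explicitly), while the linearity of $F$ makes the Abel‑summation reduction an exact equivalence rather than a mere inequality.
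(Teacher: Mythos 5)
Your proof is correct, but it takes a genuinely different route from the paper. The paper proves necessity by a direct exchange (swapping) argument on the phases themselves, organized as an induction on $k$ with a fairly heavy case analysis ($\alpha_{k+1}<\alpha_k$ versus $\alpha_{k+1}=\alpha_k$, plus separate treatment of the level sets), and then checks sufficiency by showing that any rearrangement on the level sets $\{\psi=\alpha_k\}$ preserving the per-material integrals leaves $I$ unchanged. Your Abel-summation reduction to the partial sums $\Theta_k$ decouples the problem into $N-1$ independent scalar bathtub problems for $G_k(\Theta_k)=\int_\Omega\Theta_k\psi\,d\mx$, which makes the role of the strict ordering \eqref{eq:pad} completely transparent (the weights $c_k-c_{k+1}$ are positive) and replaces the induction by a one-line translation between the bathtub conditions on $(\Theta_k^\ast)$ and the band conditions \eqref{eq:nec_cond} on $(\thez{k})$; it also yields the characterization of the residual freedom on the level sets for free. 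What the paper's argument buys in exchange is self-containedness: the swap \eqref{eq:zamjena} directly exhibits a strictly better competitor, with no appeal to the bathtub principle. The one step you should make explicit is the simultaneous realizability of the bathtub maxima by a single admissible chain $0\le\Theta_1\le\cdots\le\Theta_N=1$, since your necessity direction needs the upper bound $c_N\int_\Omega\psi\,d\mx+\sum_{k=1}^{N-1}(c_k-c_{k+1})\max G_k$ to be attained \emph{within} $\lT$ before you may conclude that an optimal $\mthz$ maximizes every $G_k$. This is routine but not automatic: the super-level sets $\{\psi>\alpha_k\}$ are nested because $(\alpha_k)$ is non-increasing, and on a common level set $\{\psi=\alpha_k\}$ the required masses $Q_k-\mu(\{\psi>\alpha_k\})$ are non-decreasing in $k$ and bounded by $\mu(\{\psi=\alpha_k\})$ thanks to $\lambda_\psi(\alpha_k)\le Q_k\le\lambda_\psi(\alpha_k-)$, so for instance the constant fill $\Theta_k=(Q_k-\mu(\{\psi>\alpha_k\}))/\mu(\{\psi=\alpha_k\})$ on each non-negligible level set produces an admissible monotone chain. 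With that sentence added, the argument is complete.
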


\begin{remark}\label{rem:idea}
The choice of numbers $\alpha_1, \ldots, \alpha_N$ also motivates the formulation and proof of Theorem \ref{tm:solT}. The underlying idea is fairly simple: given \eqref{eq:pad}, a natural approach to solving \eqref{max_2} is to allocate the material corresponding to $\mM_1$ to the part of $\Omega$ where $\psi$ is largest. Once the prescribed amount $q_1$ has been used, one then allocates the material corresponding to $\mM_2$ to the remaining portion of $\Omega$ where $\psi$ is now largest, continuing until the prescribed amount $q_2$ has been reached. At this point, a total area of size $q_1 + q_2$ has been assigned to materials $\mM_1$ and $\mM_2$. This procedure continues analogously until the entire domain $\Omega$ has been covered.

Thus, the numbers $\alpha_1, \ldots, \alpha_N$ are chosen so that cutting at the \textit{horizontal} level sets $\{\psi = \alpha_k\}$ partitions $\Omega$ into the desired regions (see Figures \ref{fig:case1} and \ref{fig:case2}).
\end{remark}

\begin{proof}
    We start by proving the necessity of the condition
    \[
        \theta^\ast_1 = 1 \text{ on } \{ \psi > \alpha_1\}.
    \]
    Assume that $\theta^\ast_1 \neq 1$ on a subset of $\{\psi > \alpha_1\}$ of positive measure. Since $\sum_{i=1}^N \theta^\ast_i=1$ and $\mu(\{\psi > \alpha_1\}) \leq q_1$, we can, for some small $\varepsilon > 0$, find $k \in \{2,\,\ldots,N\}$ and sets $A \subseteq \{ \psi > \alpha_1\}$, $B \subseteq \{ \psi \leq \alpha_1\}$ of equal positive measure such that
    \begin{itemize}
    \item $\thez{1} < 1-\varepsilon$ a.e. on $A$ and $\thez{1} > \varepsilon$ a.e. on $B$
    \item $\thez{k} > \varepsilon$ a.e. on $A$ and $\thez{k} < 1- \varepsilon$ a.e. on $B$.
\end{itemize}
We may then define $\meta \in \Lb{\Omega; \K}$ with
\begin{equation}\label{eq:zamjena}
    \meta = \mthz+\frac{\varepsilon}{2}\left(\chi_A-\chi_B\right)(\me_1-\me_k).
\end{equation}
Note that the amount of each material has remained unchanged, hence $\meta \in \lT$.
It follows
\[
    I(\meta)-I(\mthz)=\left(\frac{1}{\lambda_1(\mM_1)}-\frac{1}{\lambda_1(\mM_k)}\right)\frac{\varepsilon}{2}\int_\Omega (\chi_A-\chi_B)\psi.
\]
Since $\lambda_1(\mM_1) < \lambda_1(\mM_k)$, $\chi_A \psi > \alpha_1\chi_A$ and $\chi_B\psi \leq \alpha_1\chi_B$, we conclude
\[
    I(\meta)-I(\mthz) > 0.
\]
Hence, it is necessary for $\thez{1}$ to be equal to 1 on the set $\{\mx \in \Omega : \psi(\mx) > \alpha_1\}$.

Now, if it holds $\lambda_\psi(\alpha_1)=q_1$, we can conclude that the prescribed condition on the amount of the first material is already satisfied on the set $\{\psi > \alpha_1\}$, hence $\supp \thez{1} \subseteq \{\psi \geq \alpha_1\}$. Otherwise, we have \[\lambda_\psi(\alpha_1-) \geq q_1 > \lambda_\psi(\alpha_1),\]
which further yields
\begin{equation}\label{eq:alpha1}
    \begin{split}
        \mu(\{\psi \geq \alpha_1\}) &=\lambda_\psi(\alpha_1-) \geq q_1\\
        \mu(\{ \psi = \alpha_1 \}) &= \lambda_\psi(\alpha_1-)- \lambda_\psi(\alpha_1) > 0.
    \end{split}
\end{equation} To prove the necessary condition on the support of $\thez{1}$, it remains to show that in this case we have $\thez{1}=0$ on $\{\psi < \alpha_1\}$. 

Assume $\thez{1} \neq 0$ on a subset of $\{ \psi < \alpha_1\}$ of positive measure, and subsequently (because of \eqref{eq:alpha1}) that $\thez{1} \neq 1$ on $\{\psi \geq \alpha_1\}$. This allows us to find subsets $A \subseteq \{\psi = \alpha_1\}$ and $B \subseteq \{\psi < \alpha_1\}$ of equal positive measures such that for some $k \in \{2,\ldots,N\}$ it holds
\begin{itemize}
    \item $\thez{k} > \varepsilon$ a.e. on $A$ and $\thez{k} < 1-\varepsilon$ a.e. on $B$
    \item $\thez{1} < 1- \varepsilon$ a.e. on $A$ and $\thez{1} > \varepsilon$ a.e. on $B$.
\end{itemize}
We may now employ similar arguments to those in the previous case, providing an element $\meta \in \lT$ such that $I(\meta) > I(\mthz)$, which proves the necessity of conditions \eqref{eq:nec_cond} for $k=1$.

Next we turn to the case $k=2$ and its necessary conditions. The proof of this case essentially provides the inductive step, so we present it explicitly for clarity. First, note that the previously proven $\eqref{eq:nec_cond}_2$ for $k=1$ already implies $\supp \thez{2} \subseteq \{\psi \leq \alpha_1\}$. We differentiate two cases: $\alpha_2 < \alpha_1$ and $\alpha_2=\alpha_1$. Let us first assume $\alpha_2 < \alpha_1$. Remainder of the proof of this case is split into three parts:
\begin{enumerate}
    \item[(1)] proving $\thez{1}+\thez{2}=1$ on $\{\psi = \alpha_1\}$
    \item[(2)] proving $\thez{2}=1$ on $\{ \alpha_2 < \psi < \alpha_1\}$
    \item[(3)] proving $\thez{2}=0$ on $\{\psi < \alpha_2\}$.
\end{enumerate}
If it were $\thez{1}+\thez{2} \neq 1$ on $\{\psi = \alpha_1\}$ then there would be a $k \in \{3,\ldots,N\}$ such that for some $\varepsilon > 0$ we have $\thez{k} > \varepsilon$ on some non-negligible subset of $\{\psi = \alpha_1\}$. Since
\begin{equation*}
    \begin{split}
        q_1 + \int_{\{\psi = \alpha_1\}} \thez{2} = \int_{\{\psi \geq \alpha_1\}} \thez{1}+\thez{2} < \mu(\{\psi \geq \alpha_1\}) = \lambda_\psi(\alpha_1-) \leq \lambda_\psi(\alpha_2)\leq q_1+q_2,
    \end{split}
\end{equation*}
we deduce $\int_{\{\psi < \alpha_1\}}\thez{2} > 0$. Therefore, we can find a non-negligible subset of $\{\psi < \alpha_1\}$ of positive measure such that $\thez{2} > \varepsilon$ on it. Once again, due to \eqref{eq:pad}, swapping out $\thez{2}$ and $\thez{k}$ as in \eqref{eq:zamjena} shows that such $\mthz$ is not optimal. This proves (1). Let us now prove (2). Assuming $\thez{2} \neq 1$ on $\{\alpha_2 < \psi < \alpha_1\}$, we also necessarily have $\int_{\{\psi > \alpha_2\}} \thez{2} < q_2$. Therefore, we may again find $k \in \{3,\ldots, N\}$ and non-negligible subsets of $\{\alpha_2 < \psi < \alpha_1\}$ and $\{\psi \leq \alpha_2\}$ on which we can swap some amount of materials corresponding to $\thez{2}$ and $\thez{k}$ to deduce such $\mthz$ is not optimal. Claim (3) follows analogously, since $\thez{2} \neq 0$ on $\{\psi < \alpha_2\}$ implies $\int_{\{\psi \geq \alpha_2\}} \thez{1}+\thez{2}<\mu(\{\psi \geq \alpha_2\})$, which implies another material $\thez{k}$ must appear on $\{\psi \geq \alpha_2\}$, and the same swapping argument ensues. 

This proves necessary conditions for $k=2$ in case of $\alpha_2 < \alpha_1$. In case of $\alpha_2 = \alpha_1$, necessary conditions \eqref{eq:nec_cond} boil down to $\supp \thez{2} \subseteq \{\psi=\alpha_1\}$, so it's enough to show that $\thez{2}=0$ on $\{\psi < \alpha_1\}$. But $\thez{2} \neq 0$ on $\{\psi < \alpha_1\}$ implies $\int_{\{\psi = \alpha_1\}} \thez{1}+\thez{2}< \mu(\{\psi=\alpha_1\})$, so the same swapping argument applies again, which concludes the proof of necessary conditions \eqref{eq:nec_cond} for $k=2$. It is also important to emphasize that the proof of the first two cases also shows that $\thez{1} + \thez{2} = 1$ on ${\psi > \alpha_2}$; this conclusion is essential for establishing the inductive step (see Figures \ref{fig:case1} and \ref{fig:case2}).

As mentioned earlier, this provides the blueprint for the inductive step, so we only sketch it here. Assume that for some $k<N$ we have established the necessary conditions \eqref{eq:nec_cond} for $l=1,\ldots,k$, together with $\sum_{i=1}^l \thez{i}=1$ on $\{\psi > \alpha_l\}$. It immediately follows that $\supp \thez{k+1} \subseteq \{\psi \leq \alpha_{k}\}$.

If $\alpha_{k+1}<\alpha_k$, we mimic steps (1)-(3) from the proof of the case $k=2$, the only modification being that step (1) now reads
\[
    \sum_{i=1}^{k+1} \thez{i}=1 \text{ on } \{\psi = \alpha_k\}.
\]
From \eqref{eq:nec_cond} we obtain
\[
    \sum_{i=1}^k q_i +\int_{\{\psi = \alpha_{k}\}} \thez{k+1} = \int_{\{\psi \geq \alpha_k\}} \sum_{i=1}^{k+1} \thez{i} \leq  \lambda_\psi(\alpha_{k}-) \leq \lambda(\alpha_{k+1}) \leq\sum_{i=1}^{k+1} q_i,
\]
and hence $\int_{\{\psi = \alpha_k\}} \thez{k+1} \leq q_{k+1}$. If there exists $l\in{k+2,\ldots,N}$ such that $\thez{l}\neq 0$ on $\{\psi=\alpha_k\}$, then the final inequality is strict, and a standard swapping argument between materials $k+1$ and $l$ yields a contradiction.

Steps (2) and (3), as well as the case $\alpha_{k+1}=\alpha_k$, are handled with analogous modifications.

To prove that the conditions  \eqref{eq:nec_cond} are also sufficient for optimality in problem \eqref{max_2}, we begin by  noting that they uniquely determine $\mthz$  on open strips $\{\alpha_{k-1} < \psi < \alpha_k\}$. Although they do not ensure  uniqueness on the level sets $\{\psi = \alpha_k\}$,  additional information can be extracted from the supports of $\thez{1},\ldots,\thez{N}$. In particular, on each level set $\{\psi = \alpha_k\}$ it must hold that
    \[
        \sum_{i=k_-}^{k^+} \thez{i}=1,
    \]
where $k_-=\min \{l : \alpha_l=\alpha_k\}$ and $k^+=\max \{l : \alpha_l=\alpha_k\}+1$ (with the convention that $k^+=N$ if this expression equals $N+1$). Hence, only a restricted subset of materials may appear  on such  level sets. 
For any  $k$ and $k_-,k^+$ defined as above, 
rearranging a solution $\mtheta$ of \eqref{max_2} on the set $\{\psi = \alpha_k\}$ yields another solution $\mtht$ of $\eqref{max_2}$, provided that
    \begin{itemize}
        \item $\sum_{i=k_-}^{k^+} \tht_i=1$
        \item $\int_{\{\psi=\alpha_k\}} 
        \tht_i= \int_{\{\psi=\alpha_k\}} \theta_i$ for each $i=k_-,\ldots,k^+$,
    \end{itemize}
where the second condition follows from the volume constraints imposed on each material in the definition of $\lT$.
Indeed, under these assumptions it is straightforward to verify that  $I(\mtheta)=I(\mtht)$. 
This therefore provides a characterization of all solutions of the problem \eqref{max_2}. 
\end{proof}

\begin{figure}[H]
    \centering
    \includegraphics[scale=1]{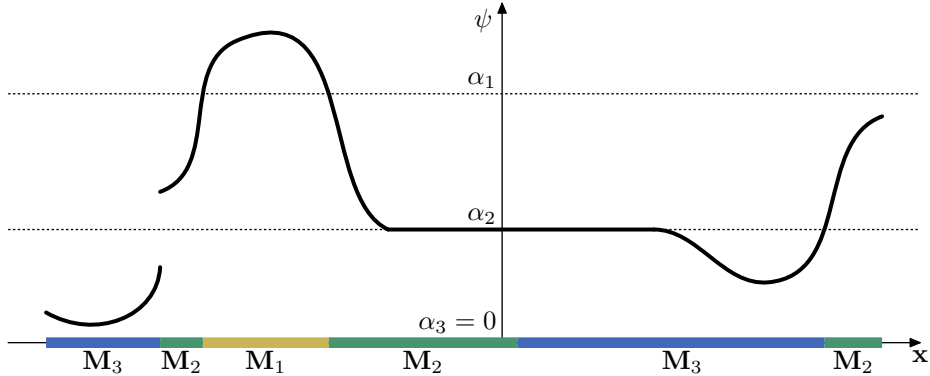}
    \caption{Example of a case where $\alpha_1 > \alpha_2 > \alpha_3$. This is a bang–bang solution in which the open strips are filled with the corresponding materials, while the level set $\psi = \alpha_2$ is non-negligible and is occupied by materials $\mM_2$ and $\mM_3$.}
    \label{fig:case1}
\end{figure}

\begin{figure}[H]
    \centering
    \includegraphics[scale=1]{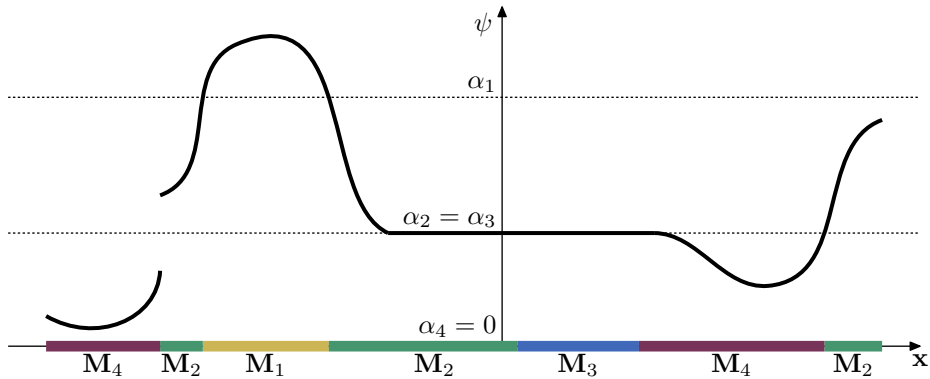}
    \caption{Example where $\alpha_1 > \alpha_2 = \alpha_3 > \alpha_4$. This is a bang–bang solution in which, on the horizontal level set $\psi = \alpha_2 = \alpha_3$, one uses the remaining amount of material $\mM_2$, the fully prescribed material $\mM_3$ (as implied by $\alpha_2=\alpha_3$), and a portion of material $\mM_4$.}
    \label{fig:case2}
\end{figure}


\begin{remark}\label{rem:generalizations}
    As a closing remark, we briefly mention two possible generalizations:
    \begin{enumerate}
        \item[(1)] Relieving the assumption \eqref{eq:pad}. Of course, we may always assume that the corresponding non-strict inequalities hold, by ordering the materials so that their smallest eigenvalues are non-decreasing. In that case, there is no essential distinction in the use of such materials: the terms $\frac{\theta_i}{\lambda_1(\mM_i)}$ coincide, and exchanging any amount of one such material for another does not change the value of $I(\mtheta)$. For instance, if $\lambda_1(\mM_1)=\dots=\lambda_1(\mM_k)$ for some $k$, then, for all practical purposes, any mixture of the materials $\mM_1,\ldots,\mM_k$ may be regarded as a single material with prescribed quantity $\sum_{i=1}^k q_i$, and one may proceed as if \eqref{eq:pad} were satisfied. It is worth noting, however, that in this case the uniqueness of solutions fails even on the open strips determined by $\alpha_1,\ldots,\alpha_N$.
        \item[(2)] 
        Prescribing an upper bound on each material, instead of the exact amount. Namely, for $q_1,\ldots,q_N \geq 0$ such that $\sum_{i=1}^N q_i \geq \mu(\Omega)$, we impose
        \[
            \int_\Omega \theta_i \leq q_i, \quad i=1,\ldots,N.
        \]
        Assuming that the setting of the problem is otherwise unchanged, the approach would follow the same pathway described in Remark~\ref{rem:idea}. Since \eqref{eq:pad} still holds in this setting, one would allocate the materials sequentially and to their full prescribed capacities until the entire domain $\Omega$ has been covered. This may result in some materials being used in smaller-than-prescribed amounts, or even not being used at all. More precisely, for $\widetilde N:= \max\{k : \sum_{i=1}^k q_i < \mu(\Omega)\}$ one prescribes the amount of materials $\mM_1,\ldots,\mM_{\widetilde N}$ to be exactly $q_i$, the amount of material $\mM_{\widetilde N+1}$ to be $\mu(\Omega)-\sum_{i=1}^{\widetilde N} q_i$ and, discarding materials $\mM_{\widetilde N +2},\ldots, \mM_N$, proceeds as in \ref{tm:solT}.
    \end{enumerate}
\end{remark}



\section{Radially Symmetric Problems}

In the sequel, we shall restrict our attention to sets $\Omega$ with spherical symmetry. 
In spherical coordinates, $\Omega$ is represented by points whose radial component $r$ lies within a given interval $\omega$. 
In other words, since we consider only connected sets, $\Omega$ may be either a ball or an annulus. 
Furthermore, we assume that the right-hand sides $f_i$ of the state equations \eqref{sem} are radial functions.

The established uniqueness of the optimal flux $\msigma^*$ in \eqref{minimax} implies that, for each $i=1,\ldots,m$, the corresponding flux $\szi$ is spherically symmetric:
\begin{equation}\label{eq:radsig}
\szi = \sigma_i^*(r) \, \me_r,
\end{equation}
where $\me_r$ denotes the unit vector in the radial direction (pointing outward). Indeed, if $\szi$ were not radial, then for a given saddle point $(\mthz,\sz)$ of $H$, one could construct another saddle point 
\[
(\mtht(\mx),\st(\mx))=(\mthz(\mQ\mx),\mQ^\top \sz(\mQ\mx))
\]
for any orthogonal matrix $\mQ\in\MR d$, which would contradict the uniqueness of $\msigma^*$.
Indeed, $\mtht$ belongs to $\lT$, $\st$ belongs to $\lS$ and
\[
H(\mtht,\st)=\sum_{i=1}^m\mu_i\int_\Omega  \frac{|\mQ^\top\szi(\mQ\mx)|^2}{\lambda_-(\mthz(\mQ\mx))} \,d\mx=H(\mthz,\sz),
\]
where we applied a change of variables in the integral.

As the first step, we look for the set $\lS$, in particular by the conclusion above, we are interested only in spherically symmetric $\msigma$. More precisely, for each $f_i$, we should  determine all
radial functions $\sigma_i$ such that
\begin{equation}\label{eq:divr}
-\frac1{r^{d-1}}\left({r^{d-1}}\sigma_i\right)'=f_i,,\;r\in\omega.
\end{equation}

The case of ball $\omega=[0,R\rangle$ is particularly simple, because if  $r\mapsto r^{\frac{d-1}{2}}f_i(r)$ belong to $\Ld{\oi0R}$, $i=1,\dots,m$, 
then $\lS$ contains a single radial flux, which must therefore coincide with  $\sz$.
Indeed, for every $i$, each equation \eqref{eq:divr} has unique solution in $\Ld{\oi0R}$:
\begin{equation}\label{eq:solsig}
\sigma_i^*(r) = -\frac{1}{r^{d-1}}\int_0^r\rho^{d-1}f_i(\rho)\,d\rho.
\end{equation}
 Consequently, the function $\psi$ appearing in Theorem \ref{tm:solT}
is expressed by formula, allowing an explicit characterization of all  maximizers of \eqref{max_2}.  For any such {\it radial} maximizer 
$\mthz$, the corresponding  state is also radial, and is uniquely determined by
\[
u^*_i(r)=-\int_r^R \frac1\lammzr \sigma^*_i(\rho)\,d\rho.
\]
In view or Remark \ref{rem:solT}, this function  satisfies condition \eqref{eq:optT}, since $\nabla u^*_i=(u^*_i)'\me_r=\frac1\lammz\msigma_i$, implying that $\mthz$ 
solves \eqref{eq:opt_des_T}. Moreover, as each $\sigma^*_i$ is radial, the simple laminate with local proportion $\mthz$ and layers orthogonal to the radial direction has conductivity $\mA$ that satisfies \eqref{eq:optB} and therefore solve both \eqref{odpb} and \eqref{odp}.  


In case of an annulus domain, an integration constant appears in formula \eqref{eq:solsig}, which complicates the direct application of the method, although its use remains feasible \cite{KVjde}.

\section{Examples}

\subsection{Single State Problem with Unique and Classical Solution}\label{pr1}

To begin, we consider a simple example of an energy maximization problem involving a single state equation.
Let $\Omega:=B(\mnul,1)\subseteq \R^2$ be the unit ball, which in polar coordinates is described by $r\in\omega:=[0,1\rangle$.
We study an optimal design problem in which the source term
$f$ is a piecewise constant function given by
\[
f={\chi}_{B(0,\frac12)}.
\]
We consider the problem of arranging three anisotropic materials within $\Omega$, to maximize the energy functional, subject to prescribed volume fractions $\eta_j\in\oi 01$, summing to one, so that $q_j=\eta_j\mu(\Omega)$ for each material.

As noted earlier, the solution of the optimization problem 
does not depend on the specific values of the minimal eigenvalues of $\mM_1,\mM_2$ and $\mM_3$ provided, for instance, that  $\lambda_1(\mM_1)<\lambda_1(\mM_2)<\lambda_1(\mM_3)$. If some of these eigenvalues coincide, the conclusions can be adapted in the sense of Remark \ref{rem:generalizations} (1). Likewise, if the material constraints are formulated  as inequalities rather than equalities, the discussion of Remark \ref{rem:generalizations} (2) applies.

The unique radial flux in $\lS$ is
\[
\sz(r)=\left\{
{\renewcommand{\arraystretch}{2.}
\begin{array}{rl}
-\displaystyle\frac r2,&0\leq r\leq \frac12\\
\displaystyle-\frac1{8r},&\frac12\leq r\leq 1,
\end{array}
}
\right.
\]
leading to $\psi=(\sz)^2$, whose graph is presented
in Figure \ref{figa}. Since the graph contains no horizontal segments, Theorem \ref{tm:solT} yields a unique solution (up to sets of measure zero) of problem \eqref{max_2}. 
This solution is radial and a bang-bang solution, obtaining only values 0 or 1 for each component of $\mthz$. By Remark \ref{rem:J2} we conclude that $(\mthz,\lammz\mI)$ is a solution of problem \eqref{odpb}, but since it is a classical solution (using only original pure materials), also a solution of \eqref{odp}. Moreover,
by the same Remark, it follows that it is the unique solution of \eqref{odpb}, and consequently \eqref{odp}.

\begin{figure}
\centering
\includegraphics[scale=1]{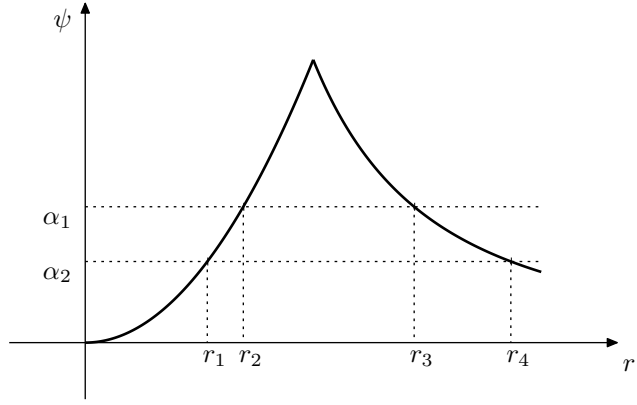}
\caption{Graph of the function $\psi$ for the example in Subsection \ref{pr1}, with the radii indicating the interfaces between the materials}\label{figa}
\end{figure}

For simplicity, we assume that $\alpha_2\geq\psi(1)$ (see Figure \ref{figa}) which is equivalent to the condition $\eta_1+\eta_2\geq\frac{15}{16}$.

Let us describe the optimal material arrangement. The first material occupies the annular region $r\in[r_2,r_3]$, where the radii $r_2$ and $r_3$ are determined by the following system of equations
\[
r_3^2-r_2^2=\eta_1\,\quad \frac{r_2^2}4=\frac1{64r_3^2}.
\]

The second material occupies the regions $r\in[r_1,r_2]\cup[r_3,r_4]$, while the third material is placed in the regions $r\in[0,r_1]\cup[r_4,1]$. The radii $r_1$ and $r_4$ are determined by the equations
\[
r_4^2-r_1^2=\eta_1+\eta_2\,,\quad\frac{r_1^2}4=\frac1{64r_4^2}.
\]

For instance, if $\eta_1=\eta_2=0.4$ (and hence $\eta_3=0.2$), these equations yield the following values for radii:
\[
{\renewcommand{\arraystretch}{2.3}
\begin{array}{ll}
\displaystyle r_1=\sqrt{\frac{-0.8+\sqrt{0.89}}2}\approx 0.2678,\qquad
&\displaystyle r_2=\sqrt{\frac{-0.4+\sqrt{0.41}}2}\approx 0.3466\\
\displaystyle r_3=\sqrt{\frac{0.4+\sqrt{0.41}}2}\approx 0.7212,\qquad
&\displaystyle r_4=\sqrt{\frac{0.8+\sqrt{0.89}}2}\approx 0.9336\,.
\end{array}
}
\]

\subsection{Appearance of Nonunique Classical Designs}\label{pr2}

Let $\Omega=B(\mnul,2)$ and consider a source term
\[
f(r)=\left\{
{\renewcommand{\arraystretch}{1.2}
\begin{array}{rl}
-2,&0\leq r\leq 1\\
\displaystyle-\frac1r,&1< r\leq 2.
\end{array}
}
\right.
\]
The function $\psi$ appearing in optimization problem \eqref{max_2} is given by
\[
\psi(r)=\left\{
{\renewcommand{\arraystretch}{1.2}
\begin{array}{rl}
r^2,&0\leq r\leq 1\\
1,&1< r\leq 2.
\end{array}
}
\right.
\]

We seek  an arrangement of three (possibly anisotropic) materials with the smallest eigenvalues satisfying $\lambda_1(\mM_1)<\lambda_1(\mM_2)<\lambda_1(\mM_3)$, and prescribed volume fractions
$q_j=\eta_j\mu(\Omega)$, $j=1,\ldots, N$, where $\eta_j\in\oi01$ and $\sum_{j=1}^N\eta_j=1$.

Owing to the simple structure of $\psi$, three distinct cases may occur:
\begin{enumerate}
    \item $\displaystyle\eta_1>\frac34$ (equivalently, $\alpha_1<1$)
    \item $\displaystyle\eta_1\leq\frac34$ and $\displaystyle\eta_1+\eta_2>\frac34$ (equivalently, $1=\alpha_1>\alpha_2$)
    \item $\displaystyle\eta_1+\eta_2\leq\frac34$ (equivalently, $\alpha_1=\alpha_2=1$).
\end{enumerate}

The analysis of the first case is parallel to that of the previous example with the same conclusion: the solution of the relaxed problem \eqref{odp} is unique, classical and radial.

In contrast, in the second and third cases the solutions of problem \eqref{max_2} are not unique. Moreover, by the discussion in the previous section, any such radial maximizer is also a solution of the optimization problems \eqref{odp}, \eqref{odpb}, and \eqref{eq:opt_des_T}.

By Theorem~\ref{tm:solT}, all such solutions can be characterized explicitly. In what follows, we restrict attention to radial solutions, as non-radial solutions fail to satisfy condition \eqref{eq:optT}, and hence cannot be solutions of \eqref{odp} (this could occur only if $\sigma^*=0$ on a set of positive measure, in which case the same argument  as in \cite[Subsection 4.2]{Vsiam} applies).

In the second case, the optimal configurations place the third material in an inner ball $B(\mnul,r_1)$ of volume $q_3$, which is surrounded by an annulus with inner radius $r_1$ and outer radius 1 filled entirely with the second material. The outer annulus with radii between  
1 and 2 is occupied by  the first two materials arbitrarily (i.e. $\theta_3=0$ and $\theta_1+\theta_2=1$ in this region), subject to the prescribed volume constraints.

In  the third case, the inner ball $B(0,1)$ is completely occupied by the third material, while in the remaining part of the domain $\Omega$ arbitrary mixtures of all three materials may occur, provided that the volume constraints for each  material are satisfied.

\subsection{Multiple State Problem}\label{pr3}

Let us now consider the case  $m=2$, with $\Omega=B(\mnul,1)\subseteq\R^2$, and source terms
\begin{equation}\label{ff}
f_1:=\chi_{B(\mnul,\frac12)}\;\hbox{ and }\; f_2:=\chi_{B(\mnul,\frac12)^c}\,,
\end{equation}
with weights $\mu_1=3$ and $\mu_2=1$. We consider three materials, with $\lambda_1(\mM_1)<\lambda_1(\mM_2)<\lambda_1(\mM_3)$ and prescribed volume fractions
$q_j=\eta_j\mu(\Omega)$, $j=1,\ldots, N$, where $\eta_j\in\oi01$ and $\sum_{j=1}^N\eta_j=1$, as in the previous examples.


It is straightforward to compute the function  
$
\psi=\mu_1 {\sigma_1^\ast}^2+\mu_2 {\sigma_2^\ast}^2$,
where $\sigma_1^\ast, \sigma_2^\ast\in\pLd{\Omega;\R^2}$ are the unique solutions of 
$-\dv\sigma_i=f_i$, $i=1,2$:
\[
\psi(r)=\left\{
{\renewcommand{\arraystretch}{2.1}
\begin{array}{rl}
\displaystyle\frac{3 r^2}{4},&0\leq r\leq \frac12\\
\displaystyle\frac{4r^4-2r^2+1}{16 r^2},&\frac12\leq r\leq 1.
\end{array}
}
\right.
\]
The graph of $\psi$ is shown in Figure \ref{figb}.

\begin{figure}
\centering
\includegraphics[scale=1]{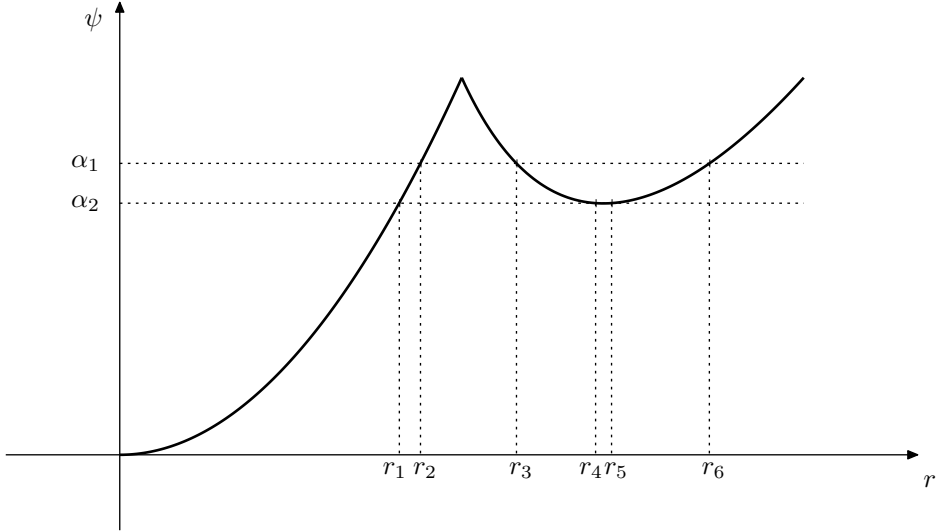}
\caption{Graph of the function $\psi$ for the example in Subsection \ref{pr3}, with the radii indicating the interfaces between the materials}\label{figb}
\end{figure}

Since $\psi$ is piecewise strictly monotone, the same analysis  as in Section \ref{pr1} applies.
In particular, the relaxed problem \eqref{odp} admits a unique solution, which is classical and radially symmetric.

For simplicity, assume that $\eta_1+\eta_2<\frac56$, or equivalently $\alpha_2>\frac18$, where $\frac18$ is the minimum value of $\psi$ on $\left[\frac12,1\right]$ (see Figure \ref{figb}).
The first material then occupies the regions $r\in[r_2,r_3]\cup[r_6,1]$, where the radii $r_2$, $r_3$, and $r_6$ are determined by a  system of algebraic equations (with $r_3<r_6$)
\[
\begin{aligned}
\displaystyle r_3^2-r_2^2+1-r_6^2&=\eta_1\\
\displaystyle\frac{3r_2^2}4=\frac{4r_3^4-2r_3^2+1}{16 r_3^2}&=\frac{4r_6^4-2r_6^2+1}{16 r_6^2}.
\end{aligned}
\]

The second material is distributed over $r\in[r_1,r_2]\cup[r_3,r_4]\cup[r_5,r_6]$, while the third material occupies $r\in[0,r_1]\cup[r_4,r_5]$. Here, $r_1$, $r_4$, and $r_5$ satisfy (with $r_4<r_5$)
\[
\begin{aligned}
\displaystyle r_4^2-r_1^2+1-r_5^2&=\eta_1+\eta_2\\
\displaystyle\frac{3r_1^2}4=\frac{(r_4^2-\frac14)^2+\frac3{16}}{4r_4^2}&=\frac{(r_5^2-\frac14)^2+\frac3{16}}{4r_5^2}.
\end{aligned}
\]
Although the explicit computation is somewhat tedious, it involves solving only quadratic equations and yields a unique solution. In the particular case $\eta_1=\eta_2=0.4$, the resulting radii are listed in Table \ref{tab1}.

\begin{table}
\caption{Approximate values of the  optimal radii for the example in Subsection~\ref{pr3} with  $\eta_1=\eta_2=0.4$.}\label{tab1}
\begin{center}
\begin{tabular}{ c  c  c  c  c  c  c  }
$r_1$ &$r_2$ & $r_3$ & $r_4 $ & $r_5$ & $r_6$ \\
\hline
0.4085&0.4395& 0.5800&0.6955&0.7189&0.8621\\
\end{tabular}
\end{center}
\end{table}

\section*{Acknowledgements}
The research  has been supported in part by Croatian Science Foundation under the project IP-2022-10-7261. 
Ivan Ivec acknowledges support from the project “Implementation of cutting-edge research and its application as part of the Scientific Center of Excellence for Quantum and Complex Systems, and Representations of Lie Algebras”, Grant No. PK.1.1.10.0004, co-financed by the European Union through the European Regional Development Fund - Competitiveness and Cohesion Programme 2021-2027.
Matko Grbac has been supported in part by Croatian Science Foundation under the project   UIP-2025-02-1337.

\printbibliography




\end{document}